\numberwithin{equation}{section}
\title[The inverse function theorem]{The inverse function theorem and the resolution of the Jacobian conjecture in free analysis}
\author{
J. E. Pascoe
}
\begin{document}

\bibliographystyle{plain}
\subjclass[2010]{Primary 46L52; Secondary  14A25,  47A56}

\def\m{\mathfrak{m}}
\def\g{\tilde{g}}
\def\B{\mathcal{B}}
\def\K{\mathcal{K}}
\def\M{\mathcal{M}}
\def\O{\mathcal{O}}
\def\P{\mathcal{P}}
\def\S{\mathcal{S}}
\def\U{\mathcal{U}}
\def\V{\mathcal{V}}
\def\W{\mathcal{W}}
\def\GL{\mathcal{G}\mathcal{L}}

\newtheorem{definition}[equation]{Definition}
\newtheorem{lemma}[equation]{Lemma}
\newtheorem{prop}[equation]{Proposition}
\newtheorem{thm}[equation]{Theorem}
\newtheorem{claim}[equation]{Claim}
\newtheorem{cor}[equation]{Corollary}
\newtheorem{ques}[equation]{Question}
\newtheorem{obs}[equation]{Observation}

\begin{abstract}
We establish an invertibility criterion for free polynomials and free functions
evaluated on some tuples of matrices.
We show that if the derivative is nonsingular on some domain
closed with respect to direct sums and similarity, the function must be invertible.
Thus, as a corollary, we establish the Jacobian conjecture in this context.
Furthermore, our result holds for commutative polynomials evaluated on tuples of commuting matrices.
\end{abstract}
\maketitle


\section{Introduction}
	A \emph{free map} is a function defined on some structured subset of tuples of matrices that respects joint invariance.
	For the purposes of introduction, the standard example is a free polynomial being evaluated on tuples of matrices.
	We give a formal defintion in Section \ref{freeanalysis}.
	
	We consider the inverse function theorem for free maps.
	
	Classically, the inverse function theorem states
	that given a map $f$, if $Df(x)$ is nonsingular for some $x,$
	then there is a neighborhood $U$ of $x$ such that $f^{-1}$ is well-defined on $f(U).$
	
	Our inverse function theorem is as follows.
	\begin{thm}[Inverse function theorem]
		Let $f$ be a free map. The following are equivalent:
		\begin{enumerate}
			\item $Df(X)$ is a nonsingular map for every $X.$
			\item $f$ is injective.
			\item $f^{-1}$ exists and is a free map.
		\end{enumerate}
	\end{thm}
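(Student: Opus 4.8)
The plan is to establish the cycle of implications $(3)\Rightarrow(2)\Rightarrow(1)\Rightarrow(3)$; of these only $(1)\Rightarrow(3)$, which is the free Jacobian conjecture, carries genuine content. The implication $(3)\Rightarrow(2)$ is immediate, as a function possessing an inverse is injective. For $(2)\Rightarrow(1)$ I would argue by contraposition: if $Df(X_0)[H_0]=0$ for some tuple $X_0$ of $k\times k$ matrices and some $H_0\neq 0$, then the block upper-triangular identity for free maps gives, for $t$ small enough that the argument below lies in the domain $\mathcal{D}$ (it does, $\mathcal{D}$ being open and containing $X_0\oplus X_0$),
\[
f\begin{pmatrix} X_0 & tH_0 \\ 0 & X_0 \end{pmatrix}=\begin{pmatrix} f(X_0) & t\,Df(X_0)[H_0] \\ 0 & f(X_0)\end{pmatrix}=\begin{pmatrix} f(X_0) & 0 \\ 0 & f(X_0)\end{pmatrix}=f(X_0\oplus X_0).
\]
Since $\left(\begin{smallmatrix}X_0 & tH_0\\0 & X_0\end{smallmatrix}\right)\neq X_0\oplus X_0$ whenever $t\neq 0$, $f$ is not injective.

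The heart of the matter is $(1)\Rightarrow(3)$, where I would first deduce injectivity. Suppose $f(A)=f(B)=:W$ with $A\neq B$; since free maps are graded, $A$ and $B$ are tuples of matrices of a common size $k$. Conjugating $A\oplus B$ by the invertible matrix $S_s=\left(\begin{smallmatrix}I & sI\\0 & I\end{smallmatrix}\right)$ yields $S_s(A\oplus B)S_s^{-1}=\left(\begin{smallmatrix}A & s(B-A)\\0 & B\end{smallmatrix}\right)$, which lies in $\mathcal{D}$ for every scalar $s$ since $\mathcal{D}$ is closed under similarity. As $W\oplus W$ commutes with $S_s$, freeness gives
\[
f\begin{pmatrix}A & s(B-A)\\ 0 & B\end{pmatrix}=S_s\,(W\oplus W)\,S_s^{-1}=W\oplus W\qquad\text{for every }s .
\]
Differentiating this in $s$ at $s=0$ shows that $Df(A\oplus B)$ annihilates the perturbation $\left(\begin{smallmatrix}0 & B-A\\0 & 0\end{smallmatrix}\right)$, which is nonzero because $A\neq B$; hence $Df(A\oplus B)$ is singular, contradicting $(1)$. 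Thus $f$ is injective, and $g:=f^{-1}$ is well defined on the range $f(\mathcal{D})$.

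To finish $(1)\Rightarrow(3)$ I would check that $g$ is again a free map, which I expect to be routine. That $f(\mathcal{D})$ is closed under direct sums and similarity, and that $g$ respects both operations, follow from the corresponding properties of $f$ together with injectivity --- e.g. $f\bigl(g(W_1)\oplus g(W_2)\bigr)=W_1\oplus W_2=f\bigl(g(W_1\oplus W_2)\bigr)$ forces $g(W_1\oplus W_2)=g(W_1)\oplus g(W_2)$, and likewise for conjugation. That $f(\mathcal{D})$ is open at each matrix size and that $g$ is analytic there is the classical inverse function theorem applied level by level, which is legitimate because $Df$ is everywhere nonsingular by $(1)$. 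Finally $(1)\Rightarrow(2)$ comes for free via $(1)\Rightarrow(3)\Rightarrow(2)$, closing the cycle.

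The one step I expect to be the genuine obstacle is the differentiation argument in the injectivity proof, together with the realization that it succeeds: it is exactly the fact that hypothesis $(1)$ is imposed at \emph{every} matrix size simultaneously --- so that a failure of injectivity at size $k$ is promoted to a singular derivative at size $2k$ --- that has no analogue in the classical Jacobian problem and renders this version tractable. A secondary point requiring care is how much is built into the definition of a free map in Section~\ref{freeanalysis}: the argument uses tacitly that $Df$ exists and is itself free, and that $\mathcal{D}$ is open in each matrix size, and one should confirm the standing hypotheses supply these.
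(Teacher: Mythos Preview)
Your proposal is essentially correct and isolates the same key insight as the paper: a failure of injectivity at matrix size $k$ forces the derivative to be singular at size $2k$, and it is precisely this size-jumping that has no classical analogue. The execution differs, however. For $(1)\Rightarrow$ injectivity you conjugate $A\oplus B$ by $S_s$ and then \emph{differentiate analytically in the parameter $s$} to extract $Df(A\oplus B)\bigl[\begin{smallmatrix}0 & B-A\\0&0\end{smallmatrix}\bigr]=0$. The paper instead doubles once more: it conjugates the $4\times4$ block $X_1\oplus X_2\oplus X_1\oplus X_2$ by a single fixed similarity, producing a matrix which, viewed as $2\times2$ with $X_1\oplus X_2$ repeated on the diagonal, fits the purely algebraic derivative formula~\eqref{hkmeqn} directly---no limit, no chain rule. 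This yields the identical conclusion but remains valid over an arbitrary unital ring, a generality the paper explicitly claims and which your parametric differentiation cannot reach (indeed, you flag this concern yourself at the end). Conversely, your version is arguably more transparent in the analytic setting and makes the role of similarity-invariance vivid. The same remark applies to your contrapositive $(2)\Rightarrow(1)$: the small parameter $t$ and the appeal to openness are unnecessary, since \eqref{hkmeqn} with $H=H_0$ already produces two distinct inputs with equal output. Your handling of $(2)\Leftrightarrow(3)$ matches what the paper leaves to the reader.
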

	In the classical case, one may obtain a neighborhood of $x$ where the derivative is nonsingular.
	The geometry of free analysis is not exactly topological due noted algebraic obstructions such as those observed in the work of
	D. S. Kaliuzhnyi-Verbovetskyi and V. Vinnikov \cite{vvw}, and
	 Amitsur and Levitzki \cite{al50}.
	Thus, we assert nonsingularity of the derivative on the entire domain in our theorem, and, in return, we obtain a global result.
	We prove the inverse function theorem result in Section \ref{ifts}.

	We also consider a famous conjecture of Ott-Heinrich Keller, the so-called Jacobian conjecture \cite{oh39}.
	\begin{ques}
		Let $P:\mathbb{C}^N \rightarrow \mathbb{C}^N$ be a polynomial map. If the Jacobian $DP(x)$ is invertible for every $x\in \mathbb{C}^N,$
		is the map $P$ itself invertible?
	\end{ques}

	James Ax \cite{ax68} and Alexander Grothendieck \cite{ega} independently showed that if a polynomial map $P:\mathbb{C}^N \rightarrow \mathbb{C}^N$
	is injective, then it must be surjective. Furthermore, a proof in \cite{rud95} shows that the inverse must be given by a polynomial via techniques from
	Galois theory. We prove a free Ax-Grothendieck theorem as Theorem \ref{faxgro}.
	
	The following is an immediate corollary of our inverse function theorem combined with the results in the preceeding paragraphs.
	\begin{thm}[Free Jacobian conjecture]\label{freejc}
		Let $\mathcal{M}(\mathbb{C})^N$ be the set of matrix $N$-tuples.
		Suppose $P:\mathcal{M}(\mathbb{C})^N \rightarrow \mathcal{M}(\mathbb{C})^N$ is a free polynomial map. The following are equivalent:
		\begin{enumerate}
			\item $DP(X)$ is a nonsingular map for each $X.$
			\item $P$ is injective.
			\item $P$ is bijective.
			\item $P^{-1}$ exists and $P^{-1}|_{\mathcal{M}_n(\mathbb{C})^N}$ agrees with free polynomial.
		\end{enumerate}
	\end{thm}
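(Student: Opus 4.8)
The plan is to obtain Theorem~\ref{freejc} as a formal combination of the inverse function theorem above, the free Ax--Grothendieck theorem (Theorem~\ref{faxgro}), and the classical Galois-theoretic argument for polynomiality of inverses recorded in \cite{rud95}; the substantive work has already been done in those results, and what remains is to assemble the four equivalences and to upgrade the conclusion ``$P^{-1}$ is a free map'' to the sharper conclusions ``$P$ bijective'' and ``$P^{-1}$ is a free polynomial on each level.'' First I would note that a free polynomial map is in particular a free map, so the inverse function theorem applies verbatim to $P$ and already yields (1)$\iff$(2) together with the fact that, under either hypothesis, $P^{-1}$ exists and is a free map. The implications (3)$\Rightarrow$(2) and (4)$\Rightarrow$(2) are immediate, since in each case $P$ is being asserted to be injective (a bijection is injective, and the existence of $P^{-1}$ forces injectivity). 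Thus it suffices to prove (2)$\Rightarrow$(3) and (2)$\Rightarrow$(4).

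For (2)$\Rightarrow$(3): a free map preserves matrix size, so for each $n$ the restriction $P|_{\mathcal{M}_n(\mathbb{C})^N}$ is a polynomial self-map of $\mathcal{M}_n(\mathbb{C})^N\cong\mathbb{C}^{Nn^2}$ whose coordinates are ordinary polynomials in the matrix entries, and injectivity of $P$ forces each such restriction to be injective. Applying the free Ax--Grothendieck theorem (Theorem~\ref{faxgro}) at each level $n$, every restriction $P|_{\mathcal{M}_n(\mathbb{C})^N}$ is surjective, hence bijective; taking the union over $n$ shows that $P$ itself is bijective.

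For (2)$\Rightarrow$(4): by the inverse function theorem $P^{-1}$ exists and is a free map, and by the previous step it is defined on all of $\mathcal{M}(\mathbb{C})^N$. For fixed $n$, the Galois-theoretic argument of \cite{rud95} applied to the injective polynomial map $P|_{\mathcal{M}_n(\mathbb{C})^N}$ shows that its set-theoretic inverse is again a polynomial self-map of $\mathbb{C}^{Nn^2}$; since this polynomial map agrees on $\mathcal{M}_n(\mathbb{C})^N$ with the free map $P^{-1}$, the joint invariance of $P^{-1}$ identifies it there with the evaluation of a free polynomial, which is exactly (4). (If Theorem~\ref{faxgro} is already stated so as to produce a free-polynomial inverse directly, this step collapses to a citation.) This also re-derives (3), closing the cycle of equivalences.

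The main obstacle is not in any single step but in the passage from level-wise data to genuinely free data: one must check that the level-wise polynomial inverses are realized by free polynomials — ideally a single free polynomial of bounded degree, at minimum a free polynomial on each level — and that the joint invariance carried by the free map $P^{-1}$ is enough to force this. Because the inverse function theorem and Theorem~\ref{faxgro} bear the real mathematical weight, this reduces to bookkeeping, which is why the statement can honestly be called an immediate corollary.
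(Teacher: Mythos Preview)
Your proposal is correct and follows the same route as the paper: the paper's proof is literally ``$(1\Leftrightarrow 2)$ follows from the inverse function theorem; $(2\Leftrightarrow 3\Leftrightarrow 4)$ is the free Ax--Grothendieck theorem.'' You unpack the latter citation by re-deriving level-wise surjectivity and level-wise polynomiality of the inverse, but as you yourself note, since Theorem~\ref{faxgro} already delivers both surjectivity and a free polynomial for $P^{-1}|_{\mathcal{M}_n(\mathbb{C})^N}$, all of that collapses to a single citation --- which is exactly what the paper does.
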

	We prove this result in Section \ref{alggeo}.
	
	We remark the last condition could be conjectured to be that $P^{-1}$ is a bona fide free polynomial. However, we caution that
	degree bounds in the Ax-Grothendieck theorem are very large \cite{bcw82}, and the theory of polynomial identity rings
	supplies many low degree polynomial identities satisfied by all matrix tuples of a specific size \cite{al50}. This means that there
	are a plethora of maps that have polynomial formulas for each specific size of matrix, but are not actually given by some polynomial.
	
	Additionally, we immediately obtain a matrix version of the commutative Jacobian conjecture
	via another application of the inverse function theorem.
	In this case, we do obtain true polynomials for the inverse map,
	because commutative polynomials are determined by their values on the scalars.
	\begin{thm}[Commuting matrix Jacobian conjecture]\label{jcmatrix}
		Let $P: \mathbb{C}^N \rightarrow \mathbb{C}^N.$
		The following are equivalent:
		\begin{enumerate}
			\item $DP(X)$ is an nonsingular map for each commuting matrix $N$-tuple $X.$
			\item $P$ is injective.
			\item $P$ is bijective.
			\item $P^{-1}$ exists and $P^{-1}$ is given by a polynomial.
		\end{enumerate}
	\end{thm}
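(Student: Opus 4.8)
The plan is to derive Theorem \ref{jcmatrix} from the already-proven free inverse function theorem by a lifting/specialization argument. A commutative polynomial map $P:\mathbb{C}^N\to\mathbb{C}^N$ can be evaluated on any commuting $N$-tuple of matrices $X=(X_1,\dots,X_N)$ by the usual functional calculus; the key observation is that this evaluation defines a free map on the domain consisting of all commuting tuples, which is closed under direct sums and under simultaneous similarity (conjugating all $X_i$ by the same invertible matrix preserves commutativity, and a direct sum of two commuting tuples is again a commuting tuple). So $P$, regarded on commuting tuples, is an instance of the free maps to which the inverse function theorem applies, once we check the domain hypotheses; the chain (1)$\Leftrightarrow$(2)$\Leftrightarrow$(3) then follows essentially verbatim from the free inverse function theorem together with the free Ax--Grothendieck theorem (Theorem \ref{faxgro}), noting that for commuting-tuple domains ``bijective'' and ``surjective plus injective'' coincide.

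The remaining content, and the one genuinely new point compared to Theorem \ref{freejc}, is upgrading condition (4) from ``a free map / size-by-size polynomial formula'' to ``a genuine polynomial'': I would argue that a commutative polynomial is completely determined by its restriction to the scalars $\mathbb{C}^N=\M_1(\mathbb{C})^N$. Concretely, specialize all hypotheses to $1\times 1$ matrices: then $DP(x)$ nonsingular for every $x\in\mathbb{C}^N$ is exactly the classical Jacobian hypothesis, so by Ax--Grothendieck $P:\mathbb{C}^N\to\mathbb{C}^N$ is bijective and $P^{-1}$ is itself a polynomial map $Q:\mathbb{C}^N\to\mathbb{C}^N$ (using the Galois-theoretic argument of \cite{rud95} cited above). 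Then I would check that $Q$, evaluated on commuting matrix tuples via functional calculus, inverts $P$ on all commuting tuples: the compositions $Q\circ P$ and $P\circ Q$ are commutative polynomial self-maps of $\mathbb{C}^N$ equal to the identity on the scalars, hence equal to the identity as polynomials, and a polynomial identity between commutative polynomials persists under evaluation on any commuting tuple of matrices (since such evaluation is an algebra homomorphism from $\mathbb{C}[z_1,\dots,z_N]$ into the commutative algebra generated by $X_1,\dots,X_N$). This yields (4), and (4)$\Rightarrow$(2) is immediate, closing the loop.

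**The main obstacle** I anticipate is purely a matter of bookkeeping rather than a deep difficulty: one must verify that the domain of all commuting $N$-tuples genuinely satisfies the structural axioms demanded by the hypotheses of the inverse function theorem (closure under direct sums, under similarity, and whatever finite-dimensional nondegeneracy / ``openness'' condition is used in Section \ref{ifts}), and that $P$ evaluated by functional calculus is a free map in the precise sense of Section \ref{freeanalysis} — in particular that $DP(X)$, the free derivative, agrees with what the inverse function theorem expects when $X$ ranges only over commuting tuples. Once that is in place, the rest is a specialization-to-scalars argument plus the standard fact that commutative polynomial identities are preserved under commuting-matrix evaluation, neither of which should present resistance. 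I would present the argument in the order: (i) set up commuting-tuple evaluation and check the domain axioms; (ii) invoke the inverse function theorem for (1)$\Leftrightarrow$(2)$\Leftrightarrow$(3); (iii) specialize to scalars, invoke Ax--Grothendieck and \cite{rud95} to get a polynomial inverse $Q$; (iv) lift the identities $Q\circ P=P\circ Q=\mathrm{id}$ back to all commuting tuples to conclude (4).
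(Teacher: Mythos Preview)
Your proposal is correct and matches the paper's approach almost exactly: apply the free inverse function theorem on the domain of commuting tuples for $(1)\Leftrightarrow(2)$, then specialize to scalars $\mathcal{M}_1(\mathbb{C})^N=\mathbb{C}^N$, invoke the classical Ax--Grothendieck theorem (with the polynomial-inverse refinement from \cite{rud95}) to obtain a polynomial inverse $Q$, and use that a commutative polynomial is determined by its values on $\mathbb{C}^N$ to conclude $(4)$; the paper then closes with $(4)\Rightarrow(3)\Rightarrow(2)$ trivially.

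The one place you diverge is your appeal to Theorem~\ref{faxgro} for $(2)\Rightarrow(3)$. That theorem is stated for free polynomials on all of $\mathcal{M}(\mathbb{C})^N$, not for commutative polynomials restricted to the commuting-tuple variety, so it does not apply verbatim (its proof restricts to a full affine space $\mathcal{M}_n(\mathbb{C})^N$, whereas commuting tuples form a proper subvariety, and you would need the variety version of Ax's theorem). The paper sidesteps this by going directly $(2)\Rightarrow(4)\Rightarrow(3)$ via the scalar specialization; your own steps (iii)--(iv) do precisely this, so the detour through Theorem~\ref{faxgro} is both unjustified as written and unnecessary.
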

	We prove this result in Section \ref{alggeo}.
	
	We caution that the structure of free maps greatly simplifies their geometry. The Jacobian conjecture in the classical context is notoriously difficult, but
	in the matricial context is shown here to be tractable.
	Indeed, free maps have generally been observed to encode nonlocal data in many contexts which is in strong contrast to the classical case.
	For another example, compare between Putinar's Positivstellensatz \cite{put93} to Helton's noncommutative Positivstellensatz \cite{helton02}.
	
\subsection{Some examples of domains of invertibility}
	We briefly give some examples of applications of our main result, the inverse function theorem for free maps.

	\subsubsection{Domains of inveribility for squaring}
	Take the function
		$$f(X) = X^2.$$
	Suppose we want to find a domain where $f$ is invertible.
	We obtain a derivative for $f$ given by the formula
		$$Df(X)[H]= XH+HX.$$
	Thus, by the inverse function theorem, we need the equation
		$$XH+HX=0$$
	to have no nontrivial solutions for each $X$ in our domain.
	This is a degenerate form of the famous Sylvester equation, so this will be nonsingular if $X$ has
	no eigenvalues in common with $-X.$ For a detailed account of the Sylvester equation, see Horn and Johnson \cite{horjoh85}.
	Thus, if we take a subset $H\subset \mathbb{C}$ such that $H \cap -H=\emptyset,$ and lift to the set of matrices
	with spectrum in $H,$ then $f$ will be invertible there. In fact, these are all possible maximal domains for such an inverse.
	
	\subsubsection{The quadratic symmetrization map}
	Consider,
		$$f(X,Y) = (X + Y, X^2 + Y^2).$$
	Taking the derivative,
		$$Df(X,Y)[H,K] = (H+K,HX +XH+KY+YK).$$
	So we need to check the second coordinate of the derivative is nonzero when $H = - K.$
	So, we want
		$$H(X-Y) +(X-Y)H= 0$$
	to have no nontrivial solutions.
	By the same use of Sylvesters equation as in the first example,
	this exactly says $X-Y$ needs to have spectrum disjoint from $Y-X.$
	
	\subsubsection{A more exotic quadratic map}
	Now consider the function
		$$f(X) = (X+X^2+[X,Y],Y+[X,Y]).$$
	Taking the derivative,
		$$Df(X,Y)[H,K] = (H+HX+HX+[H,Y]+[X,K],K+[H,Y]+[X,K]).$$
	Suppose this had a nontrivial solution at some $(X,Y)$ for $(H,K).$
	Either $\|H\|\geq\|K\|$ or $\|K\|\geq\|H\|.$
	In the case where $\|H\|\geq\|K\|,$ $H\neq 0,$ and
		$$\|H+HX+XH+[H,Y]+[X,K]\|\geq  \|H\|(1 - 4\|X\| - 2\|Y\|).$$
	So, it must be that $1 - 4\|X\| - 2\|Y\|\leq 0.$
	In the case where $\|K\|\geq\|H\|,$ $K\neq 0,$ and
		$$\|K+[H,Y]+[X,K]\| \geq \|K\|(1 - 2\|X\| - 2\|Y\|)$$
	So, it must be that $1 - 2\|X\| - 2\|Y\|\leq 0.$
	
	Restricting $f$ to the set of $(X,Y)$ such that $4\|X\| + 2\|Y\|<1$ precludes $Df$ from being singular.
	However, this fails to be a free domain since it is not closed with repect to direct sums. (See Section \ref{freeanalysis} for a formal
	definition of a free domain.) However, if we restrict $f$ to the set of $(X,Y)$ such that $\|X\|<\frac{1}{8}$ and $\|Y\|<\frac{1}{4}$
	we do indeed obtain a free map, and thus by the inverse function theorem, the function $f$ will be invertible there.
	
\section{Free analysis}\label{freeanalysis}
	
	Let $\mathcal{M}_n$ be the $n \times n$ matrices.
	We denote $\mathcal{M}^N = \bigcup \mathcal{M}_n^N.$
	
	A \emph{free set} $D \subset \mathcal{M}^N$ is closed under direct sums and joint similarity. That is, 
	\begin{enumerate}
		\item $A, B \in D \Rightarrow A \oplus B \in D,$
		\item $A \in D \cap \mathcal{M}_n^N, S \in GL_n \Rightarrow S^{-1}AS \in D.$
	\end{enumerate}
	Where
		$$(A_1,A_2,\ldots,A_N) \oplus (B_1,B_2,\ldots,B_N)=(A_1 \oplus B_1,A_2 \oplus B_2,\ldots,A_N\oplus B_N),$$
	and
		$$S^{-1}(A_1,A_2,\ldots,A_N)S=(S^{-1}A_1S,S^{-1}A_2S,\ldots,S^{-1}A_NS).$$
	A prototypical example of such a set is the zero set of some free polynomial map.
	For example, the commuting tuples of matrices form a free set.
	
	We define a \emph{free domain} $D \subset \mathcal{M}^N$ to be either a free set or,
	if working over a local field, a set that is relatively open in its orbit
	under conjugation by invertible matrices and is closed under direct sums.
	Any function on a free domain extends to a function on a free set
	as in the envelope method described in \cite{vvw}.
	
	A \emph{free map} $f:D \rightarrow \mathcal{M}^{\hat{N}}$ obeys the following
	\begin{enumerate}
		\item $f(A \oplus B) = f(A) \oplus f(B),$
		\item $f(S^{-1}AS) = S^{-1}f(A)S,$
		\item $D$ is a free domain.
	\end{enumerate}
	This definition of free sets and free maps is a direct generalization of the definition given in Helton-Klep-McCullough \cite{hkm11};
	in the language of Agler-McCarthy \cite{amy13} this generalizes functions on basic open sets in their free topology.
	
	Essentially, these maps are an emulation of the classical functional calculus for non-commuting tuples of operators.
	
	\emph{We note that we do not specify a ground ring for the matrices in the general inverse function theorem;
	it merely needs to have a multiplicative identity.}
	
	\subsection{Derivatives in free analysis}
	Helton, Klep and McCullough differentiated free maps in \cite{hkm11}. They obtained the formula
	\begin{equation}\label{hkmeqn}
		f\left(
		\begin{matrix}
		X & H \\
		0 & X \\
		\end{matrix}
		\right)
		=
		\left(
		\begin{matrix}
		f(X) & Df(X)[H] \\
		0 & f(X) \\
		\end{matrix}
		\right).
	\end{equation}
	These types of formulas are pervasive throughout the free analysis literature. For other references see Voiculescu \cite{voi10} and
	the recently completed tome by D. S. Kaliuzhnyi-Verbovetskyi and V. Vinnikov \cite{vvw}.
	
	Formula \ref{hkmeqn} can be seen to be true for formal differentiation satisfying Leibniz rule.
	Thus, we eschew any analytic means for obtaining the derivative, and instead use the above as our definition. 
	That is, our results formally hold over any field, or indeed unital ring, and for sets that may have cusps or other exotic geometric features.
	We formalize the above in the following proposition.
	\begin{prop}
	Define
		$$D(X_i)[H_j] = \left\{\begin{matrix}
			H_i, & \text{if }i=j \\
			0, & \text{if }i\neq j \\
		\end{matrix}\right.,$$
	and require
		$$D(f+g)[H] = D(f)[H] + D(g)[H],$$
	and
		$$D(fg)[H] = D(f)[H]g + fD(g)[H].$$
	Equation \ref{hkmeqn} is satisfied.
	\end{prop}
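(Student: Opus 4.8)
The plan is to reduce Equation~\ref{hkmeqn} to a bookkeeping identity in one auxiliary algebra. Observe that the $2\times 2$ block upper-triangular matrices over $\mathcal{M}_n$ with equal diagonal blocks,
$$
\left\{ \begin{pmatrix} A & B \\ 0 & A \end{pmatrix} : A, B \in \mathcal{M}_n \right\},
$$
form a unital subalgebra of $\mathcal{M}_{2n}$, and the assignment $A + \epsilon B \mapsto \begin{pmatrix} A & B \\ 0 & A \end{pmatrix}$ identifies it with the ring of dual numbers $\mathcal{M}_n[\epsilon] = \mathcal{M}_n \otimes \mathbb{C}[\epsilon]/(\epsilon^2)$ (or the analogue over any unital ground ring). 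Under this identification the doubled tuple with $i$-th entry $\begin{pmatrix} X_i & H_i \\ 0 & X_i \end{pmatrix}$ corresponds to the tuple with $i$-th entry $X_i + \epsilon H_i$, and the right-hand side of~\ref{hkmeqn} corresponds to $f(X) + \epsilon\, Df(X)[H]$. So it suffices to show that evaluating $f$ at the tuple $(X_i + \epsilon H_i)_i$ produces $f(X) + \epsilon\, Df(X)[H]$.

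The key steps are then as follows. First, establish that on free polynomials the rules in the statement genuinely define a single operator $D$: define $D$ on a monomial $X_{i_1}\cdots X_{i_k}$ by the explicit sum $\sum_{\ell} X_{i_1}\cdots X_{i_{\ell-1}} H_{i_\ell} X_{i_{\ell+1}} \cdots X_{i_k}$ and extend by linearity, then verify directly that this satisfies the Leibniz rule; this shows the three properties are consistent and in fact characterize $D$ uniquely, since any operator obeying them is forced to agree with this formula on every polynomial. Second, in $\mathcal{M}_n[\epsilon]$ write $f(X + \epsilon H) = c_0(f) + \epsilon\, c_1(f)$; because $\epsilon^2 = 0$ and substitution is a ring homomorphism into $\mathcal{M}_n[\epsilon]$, one reads off $c_0(f) = f(X)$, $c_1$ additive, $c_1(fg) = c_1(f)\, g(X) + f(X)\, c_1(g)$, and $c_1(X_i) = H_i$. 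By the uniqueness just established, $c_1(f) = Df(X)[H]$ for every polynomial $f$, and transporting this back along the algebra isomorphism yields~\ref{hkmeqn}. (One could equally run a direct induction on the degree of $f$, splitting off the last variable of each monomial and multiplying the corresponding $2\times 2$ blocks; this is the same computation organized differently.)

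The only point that needs genuine care — and the closest thing to an obstacle — is the constant case. The base formula only addresses $D(X_i)[H_j]$, so one must note that the Leibniz rule applied to $1 = 1\cdot 1$ forces $D(1)[H] = 2\,D(1)[H]$, hence $D(1)[H] = 0$, and therefore $D(c)[H] = 0$ for every scalar $c$; without this the linear extension in the previous paragraph is not well posed. Beyond that there is no real difficulty: the identity is formal and holds over any unital ring, exactly as the surrounding text emphasizes. If one wants~\ref{hkmeqn} for free maps that are not literally polynomials, one invokes the envelope construction of Kaliuzhnyi-Verbovetskyi and Vinnikov \cite{vvw} referenced above to reduce to the polynomial case — or, as the text suggests, one simply takes~\ref{hkmeqn} as the definition of $Df$ and reads the proposition as the assertion that this definition is consistent with formal differentiation.
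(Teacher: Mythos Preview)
Your argument is correct. The paper's own proof is precisely the direct induction you sketch in your parenthetical: it reduces to monomials and multiplies $\begin{pmatrix} X_i & H_i \\ 0 & X_i \end{pmatrix}$ against $\begin{pmatrix} m(X) & Dm(X)[H] \\ 0 & m(X) \end{pmatrix}$ to step from $m$ to $X_i m$. Your primary route---identifying the block upper-triangular algebra with the dual numbers $\mathcal{M}_n[\epsilon]$ and then showing that the $\epsilon$-coefficient of $f(X+\epsilon H)$ satisfies the same three defining properties as $D$, hence coincides with it by uniqueness---is a more conceptual repackaging of the same computation. The gain is that the Leibniz step is absorbed into the single observation that substitution is a ring homomorphism and $\epsilon^2=0$; the cost is that you must first argue the three rules determine $D$ uniquely (and handle the constant case, which you do). Either way the content is the same formal identity, and your remark that the argument is purely algebraic over any unital ground ring matches the paper's emphasis.
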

	\begin{proof}
	We only need to prove this fact on monomials.
	This is obtained inductively via the following algebra: $$
		\begin{matrix}
			\left(
				\begin{matrix}
				X_i & H_i \\
				0 & X_i \\
				\end{matrix}
			\right)
			m\left(
				\begin{matrix}
				X & H \\
				0 & X \\
				\end{matrix}
			\right)
			& = & 
			\left(
				\begin{matrix}
				X_i & H_i \\
				0 & X_i \\
				\end{matrix}
			\right)
			\left(
				\begin{matrix}
				m(X) & Dm(X)[H] \\
				0 & m(X) \\
				\end{matrix}
			\right) \\ \\
			& = &
			\left(
				\begin{matrix}
				X_im(X) & X_iDm(X)[H] + H_im(X) \\
				0 & X_im(X) \\
				\end{matrix}
			\right) \\ \\
			& = &
			\left(
				\begin{matrix}
				X_im(X) & D(X_im(X))[H] \\
				0 & X_im(X) \\
				\end{matrix}
			\right).
		\end{matrix}
		$$
		\end{proof}

\section{The inverse function theorem}\label{ifts}
	We now prove the inverse function theorem.
	\begin{proof}
		$(\neg 1 \Rightarrow \neg 2)$
		Suppose $Df$ is singular at some $X$ and some direction $H\neq 0.$
		That is, $Df(X)[H] = 0$.
		So, applying \ref{hkmeqn} and the direct sum formula,
		$$
		\begin{matrix}
			f\left(
			\begin{matrix}
			X & H \\
			0 & X \\
			\end{matrix}
			\right)
	 	& = &
			\left(
			\begin{matrix}
			f(X) & Df(X)[H] \\
			0 & f(X) \\
			\end{matrix}
			\right) \\ \\
		& = &
			\left(
			\begin{matrix}
			f(X) & 0 \\
			0 & f(X) \\
			\end{matrix}
			\right)\\ \\
		& = &
			f\left(
			\begin{matrix}
			X & 0 \\
			0 & X \\
			\end{matrix}
			\right)
		\end{matrix}$$
		This equality witnesses the noninjectivity of $f.$
		
		$(1 \Rightarrow 2)$
		Suppose $Df(X)$ is not singular at any $X$.
		Let $X_1, X_2$ be two matrices of the same size such that $f(X_1) = f(X_2).$
		Let
		$$ $$
		$$S =
		\left(
		\begin{matrix}
		1 & 0 & 0 & 1 \\
		0   & 1 & 0 & 0\\
		0 & 0 & 1 & 0\\
		0 & 0 & 0 & 1  \\
		\end{matrix}
		\right).
		$$ 
		$$ $$
		Let,
		$$ $$
		$$
		X = 
		\left(
		\begin{matrix}
		X_1 & 0 & 0 & 0 \\
		0   & X_2 & 0 & 0\\
		0 & 0 & X_1 & 0\\
		0 & 0 & 0 & X_2  \\
		\end{matrix}\right).
		$$
		$$ $$
		Note,
			$$ $$
			$$S^{-1}XS=
			\left(
			\begin{matrix}
				X_1 & 0 & 0 & X_1-X_2 \\
				0   & X_2 & 0 & 0\\
				0 & 0 & X_1 & 0\\
				0 & 0 & 0 & X_2  \\
			\end{matrix}
			\right),
			$$
			$$ $$
		and, since
			$$ $$
			$$
			f(X)=
			\left(
				\begin{matrix}
				f(X_1) & 0 & 0 & 0 \\
				0   & f(X_2) & 0 & 0\\
				0 & 0 & f(X_1) & 0\\
				0 & 0 & 0 & f(X_2)  \\
				\end{matrix}\right)
			$$
			$$ $$
		because $f$ preserves direct sums, we obtain the fomula
		$$ $$
			$$S^{-1}f(X)S=
			\left(
			\begin{matrix}
				f(X_1) & 0 & 0 & f(X_1)-(X_2) \\
				0   & f(X_2) & 0 & 0\\
				0 & 0 & f(X_1) & 0\\
				0 & 0 & 0 & f(X_2)  \\
			\end{matrix}
			\right).
			$$
		$$ $$
		$$ $$
		So, via the similarity relation for free maps,
		$$ $$
		$$f\left(
		\begin{matrix}
		X_1 & 0 & 0 & X_1-X_2 \\
		0   & X_2 & 0 & 0\\
		0 & 0 & X_1 & 0\\
		0 & 0 & 0 & X_2  \\
		\end{matrix}
		\right)
		=
		\left(
		\begin{matrix}
		f(X_1) & 0 & 0 & f(X_1)-f(X_2) \\
		0   & f(X_2) & 0 & 0\\
		0 & 0 & f(X_1) & 0\\
		0 & 0 & 0 & f(X_2)  \\
		\end{matrix}
		\right).
		$$
		$$ $$
		$$ $$
		Thus, since we assumed $f(X_1)=f(X_2),$
			$$ $$
			$$
			f\left(
				\begin{matrix}
				X_1 & 0 & 0 & X_1-X_2 \\
				0   & X_2 & 0 & 0\\
				0 & 0 & X_1 & 0\\
				0 & 0 & 0 & X_2  \\
				\end{matrix}
			\right)
			=
			\left(
			\begin{matrix}
			f(X_1) & 0 & 0 & 0 \\
			0   & f(X_2) & 0 & 0\\
			0 & 0 & f(X_1) & 0\\
			0 & 0 & 0 & f(X_2)  \\
			\end{matrix}
			\right).
			$$
			$$ $$
		On the other hand, by \ref{hkmeqn},
		$$ $$
		$$f\left(
		\begin{matrix}
		X_1 & 0 & 0 & X_1-X_2 \\
		0   & X_2 & 0 & 0\\
		0 & 0 & X_1 & 0\\
		0 & 0 & 0 & X_2  \\
		\end{matrix}
		\right)
		=
		\left(
		\begin{matrix}
		f\left(\begin{matrix}X_1 & 0 \\0 & X_2 \end{matrix} \right)
		& Df\left(\begin{matrix}X_1 & 0 \\0 & X_2 \end{matrix} \right)\left[\begin{matrix}0 & X_1 - X_2 \\0 & 0 \end{matrix} \right] \\
		0 & f\left(\begin{matrix}X_1 & 0 \\0 & X_2 \end{matrix} \right) \\
		\end{matrix}
		\right).$$
		$$ $$
		So, $$Df\left(\begin{matrix}X_1 & 0 \\0 & X_2 \end{matrix} \right)\left[\begin{matrix}0 & X_1 - X_2 \\0 & 0 \end{matrix} \right] =
		\left(\begin{matrix}0 & 0 \\0 & 0 \end{matrix} \right).$$
		Thus,  $X_1-X_2 = 0,$ since we assumed $Df(X)[H]$ is nonsingular for all $X$ in the domain, or equivalently that $Df(X)[H]=0$ implies $H=0.$
		So, $f$ is injective.
		
		$(2 \Leftrightarrow 3)$ We leave this to the reader. It is similar to a proof in Helton-Klep-McCullough \cite{hkm11}.
	\end{proof}

\section{Polynomial maps} \label{alggeo}
	In this section we recall some classical results from algebraic geometry which we will use to prove the Jacobian
	conjecture for free polynomials and commuting matrix polynomials, and subsequently give these proofs.
	
	Ott-Heinrich Keller infamously suggested the following conjecture.
	\begin{ques}[Jacobian conjecture]
		Let $P: \mathbb{C}^N \rightarrow \mathbb{C}^N$ be a polynomial map.
		If $DP(x)$ is nonsingular for every $x$, must $P$ necesarily be invertible? Furthermore,
		can the inverse be taken to be a polynomial?
	\end{ques}

	James Ax and Alexander Grothendieck independently proved the following seemingly related result about polynomial maps.
	\begin{thm}[Ax-Grothendieck theorem \cite{ax68}, \cite{ega}]
		Let $P: \mathbb{C}^N \rightarrow \mathbb{C}^N$ be a polynomial map.
		If $P$ is injective, then $P$ is surjective.
	\end{thm}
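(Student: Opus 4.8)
The plan is to prove this via a Lefschetz-style transfer principle, reducing from the field $\mathbb{C}$ to algebraically closed fields of positive characteristic, and finally to finite fields, where the statement is a triviality. Fix $N$ and a degree bound $d$. The assertion that \emph{every polynomial map $P\colon k^N\to k^N$, all of whose coordinates have degree at most $d$, is surjective whenever it is injective} can be written as a single first-order sentence $\Phi_{d,N}$ in the language of rings: quantify universally over the (finitely many) coefficients defining $P$, express injectivity as $\forall x\,\forall y\,(P(x)=P(y)\to x=y)$, and surjectivity as $\forall z\,\exists w\,(P(w)=z)$. Since $d$ was arbitrary, it suffices to show that each $\Phi_{d,N}$ holds in $\mathbb{C}$.

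First I would dispatch the case of a finite field $\mathbb{F}_q$: a polynomial map $\mathbb{F}_q^N\to\mathbb{F}_q^N$ is a function from a finite set to itself, so if it is injective it is surjective, by pigeonhole. Next, over $\overline{\mathbb{F}_p}$, suppose some injective $P$ of degree at most $d$ missed a point $b\in\overline{\mathbb{F}_p}^N$. The coefficients of $P$ together with $b$ lie in some finite subfield $\mathbb{F}_q$, so $P$ restricts to a self-map of $\mathbb{F}_q^N$; this restriction is injective, hence surjective by the finite case, so $b$ lies in its image --- a contradiction. Thus $\Phi_{d,N}$ holds in $\overline{\mathbb{F}_p}$ for every prime $p$.

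Finally I would transfer to characteristic $0$. Fix a nonprincipal ultrafilter $\mathcal{U}$ on the set of primes and form the ultraproduct $K=\prod_p\overline{\mathbb{F}_p}\big/\mathcal{U}$. By {\L}o\'{s}'s theorem $K$ is an algebraically closed field, and for each integer $n>1$ the equation $n=0$ fails in all but finitely many factors, so $K$ has characteristic $0$; since the theory of algebraically closed fields of characteristic $0$ is complete (Tarski), $K$ is elementarily equivalent to $\mathbb{C}$. By {\L}o\'{s}'s theorem again, $K\models\Phi_{d,N}$, since $\{p:\overline{\mathbb{F}_p}\models\Phi_{d,N}\}$ is the set of all primes and hence lies in $\mathcal{U}$. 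Therefore $\mathbb{C}\models\Phi_{d,N}$; and as this holds for every $d$, every injective polynomial self-map of $\mathbb{C}^N$ is surjective.

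The one point that requires care --- and the step I would flag as the crux --- is the first-order encoding: the quantifier over polynomials must be replaced by a quantifier over their coefficient vectors, which forces the degree bound $d$ to be fixed in advance. This is harmless, since the conclusion follows by ranging over all $d$; everything else is either the pigeonhole principle or soft model theory, so there is no analytic or geometric obstacle. Alternatively, one may run the scheme-theoretic argument of Grothendieck \cite{ega}: spread $P$ out over a finitely generated $\mathbb{Z}$-subalgebra of $\mathbb{C}$ and specialize at closed points, whose residue fields are finite --- which amounts to the same reduction.
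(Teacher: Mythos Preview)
Your argument is correct and is essentially Ax's original model-theoretic proof: encode the statement for fixed degree as a first-order sentence, verify it in $\overline{\mathbb{F}_p}$ by reducing to finite subfields and pigeonhole, and transfer to $\mathbb{C}$ via completeness of $\mathrm{ACF}_0$ (or, equivalently, via an ultraproduct and {\L}o\'{s}). The only delicate point is exactly the one you flag --- that the degree must be bounded to get a first-order sentence --- and you handle it correctly by quantifying over $d$ externally.

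However, there is nothing in the paper to compare against: the paper does not prove the Ax--Grothendieck theorem at all. It merely records it as a classical result, citing Ax and Grothendieck, and then invokes it (and its refinement giving a polynomial inverse) as a black box in the proofs of the free and commuting-matrix Jacobian theorems. So your write-up supplies strictly more than the paper does on this point; if anything, it is closer in spirit to the references the paper cites than to the paper itself.
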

	This reduces the Jacobian conjecture to showing the condition on the derivative implying global injectivity.
	Furthermore, this result has been refined to the following.
	\begin{thm}[Ax-Grothendieck theorem\cite{rud95}]
		Let $P: \mathbb{C}^N \rightarrow \mathbb{C}^N$ be a polynomial map.
		If $P$ is injective, then $P$ is surjective and $P^{-1}$ is given by a polynomial.
	\end{thm}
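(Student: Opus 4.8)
The plan is to prove the statement in two stages, mirroring the historical development: first establish surjectivity by a transfer-of-coefficients argument between characteristics, and then upgrade the resulting set-theoretic inverse to a polynomial one using field theory.

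For surjectivity, fix a degree bound $d$ and consider the first order sentence $\sigma_d$ in the language of rings asserting that every self-map of $N$-space whose coordinates are polynomials of degree at most $d$ is surjective whenever it is injective. Since the degree constraint fixes the number of coefficients, and injectivity and surjectivity are each expressed by a first order formula quantifying over the coefficient values and the coordinate variables, $\sigma_d$ is a genuine first order sentence. It holds in $\overline{\mathbb{F}_p}$ for every prime $p$: the coefficients of any such $P$ together with any prescribed target point lie in a common finite subfield $\mathbb{F}_q$, so $P$ restricts to an injective, hence bijective, self-map of the finite set $\mathbb{F}_q^N$, and the target point is hit. Because $\mathrm{ACF}_p$ is complete, $\sigma_d$ is a theorem of $\mathrm{ACF}_p$ for every $p$, and then by the compactness theorem $\sigma_d$ is a theorem of $\mathrm{ACF}_0$; in particular it holds over $\mathbb{C}$. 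Letting $d$ range over the positive integers yields the Ax--Grothendieck theorem: an injective polynomial map $\mathbb{C}^N \to \mathbb{C}^N$ is surjective.

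For the polynomial inverse, introduce indeterminates $x_1,\dots,x_N$, set $y_i = P_i(x_1,\dots,x_N)$, and put $K = \mathbb{C}(x_1,\dots,x_N)$ and $L = \mathbb{C}(y_1,\dots,y_N)$. Injectivity forces the image of $P$ to have dimension $N$, since its generic fibers are points (Chevalley's theorem on constructible images together with the fiber-dimension theorem), so $y_1,\dots,y_N$ are algebraically independent and $L$ is a rational function field of transcendence degree $N$. Hence $K/L$ is a finite extension, and in characteristic zero it is separable, so its degree equals the cardinality of a generic fiber of $P$, namely $1$. Therefore $K = L$, each $x_i$ is a rational function $R_i(y_1,\dots,y_N)$, and the rational self-map $R = (R_1,\dots,R_N)$ of $\mathbb{C}^N$ satisfies $R \circ P = \mathrm{id}$ as rational maps, hence on a Zariski-dense open set; combining with the surjectivity from the first stage gives $P \circ R = \mathrm{id}$ wherever defined. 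Thus $P$ is a bijective birational morphism of $\mathbb{C}^N$ onto itself.

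It remains to show that $R$ has no poles, equivalently that $P^{-1}$ is a morphism, and this is the step I expect to be the main obstacle, since a rational function can a priori misbehave along a subvariety on which the coordinates of $P$ remain bounded while escaping to infinity. The cleanest resolution is Zariski's main theorem: a quasi-finite birational morphism onto a normal variety is an open immersion, so a bijective such morphism onto $\mathbb{C}^N$ is an isomorphism, whence $P^{-1}$ is polynomial. Alternatively, and more in the spirit of \cite{rud95}, one argues directly that $\mathbb{C}[x_1,\dots,x_N]$ is integrally closed and that bijectivity of $P$ forces each $x_i$ to be integral over $\mathbb{C}[y_1,\dots,y_N]$, so that $x_i \in \mathbb{C}[y_1,\dots,y_N]$ and the inverse is given by polynomials. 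In either approach the delicate point is exactly the integrality, i.e.\ ruling out that $P$ is merely quasi-finite rather than finite; the remaining bookkeeping is routine.
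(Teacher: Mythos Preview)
The paper does not actually prove this theorem; it is quoted as a known classical result, with surjectivity attributed to Ax \cite{ax68} and Grothendieck \cite{ega} and the polynomiality of the inverse attributed to \cite{rud95}, which the introduction describes only as proceeding ``via techniques from Galois theory.'' So there is no in-paper proof to compare against.

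Your proposal is a correct and standard account of both halves. The model-theoretic transfer for surjectivity is the textbook Ax argument, and your field-theoretic step---showing $[\mathbb{C}(x):\mathbb{C}(y)]$ equals the generic fiber size and hence is $1$, then upgrading the rational inverse to a regular one via normality/Zariski's main theorem---is exactly the Galois-flavored approach the paper alludes to. The one place you flag as delicate, namely integrality (equivalently, ruling out that $P$ is merely quasi-finite), is genuinely the crux, and invoking Zariski's main theorem for a bijective birational morphism onto the normal variety $\mathbb{A}^N$ is an acceptable way to close it; the alternative integral-closure phrasing you mention is equivalent. Nothing here is a gap, and nothing conflicts with the paper, which simply takes the result as input.
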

	This tacitly gives an equivalence between a polynomial map being invertible and having a polynomial inverse.
	
	This can be lifted to the free case.
	\begin{thm}[Free Ax-Grothendieck theorem]\label{faxgro}
		Let $P: \mathcal{M}(\mathbb{C})^N \rightarrow \mathcal{M}(\mathbb{C})^N$ be a free polynomial map.
		If $P$ is injective, then $P$ is surjective and $P^{-1}|_{\mathcal{M}^N_n}$ is given by a free polynomial.
	\end{thm}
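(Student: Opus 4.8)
The plan is to reduce the free statement to the classical refined Ax--Grothendieck theorem by restricting attention to one matrix size at a time. Fix $n$ and consider the restriction $P|_{\mathcal{M}_n(\mathbb{C})^N}$. Since $P$ is a free polynomial map, each of its $\hat N$ coordinates is a noncommutative polynomial, and evaluating a fixed noncommutative polynomial on $\mathcal{M}_n(\mathbb{C})^N$ produces a (commutative) polynomial map $P_n : \mathbb{C}^{Nn^2} \to \mathbb{C}^{\hat N n^2}$ in the $Nn^2$ matrix entries, of degree at most $\deg P$. For the dimensions to match, note that if $P$ is injective then by the inverse function theorem (or directly) $Df(X)$ is nonsingular for all $X$, which forces the number of output coordinates to equal the number of input coordinates as free polynomials, so $\hat N = N$ and $P_n : \mathbb{C}^{Nn^2} \to \mathbb{C}^{Nn^2}$.

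Next I would show $P_n$ is injective. This is immediate: if $P_n(X) = P_n(Y)$ for $X, Y \in \mathcal{M}_n(\mathbb{C})^N$, then $P(X) = P(Y)$, so $X = Y$ by injectivity of the free map $P$. Now apply the classical refined Ax--Grothendieck theorem (the version attributed to \cite{rud95} quoted above) to the polynomial map $P_n : \mathbb{C}^{Nn^2} \to \mathbb{C}^{Nn^2}$: it is surjective, and its inverse $P_n^{-1}$ is given by a polynomial in the $Nn^2$ entry-coordinates. Surjectivity of $P_n$ for every $n$ yields surjectivity of $P$ on all of $\mathcal{M}(\mathbb{C})^N$. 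This gives a candidate inverse on each level $\mathcal{M}_n(\mathbb{C})^N$, namely the entrywise polynomial $P_n^{-1}$.

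The main obstacle is the last clause: upgrading the level-$n$ commutative polynomial inverse $P_n^{-1}$ to something ``given by a free polynomial.'' Here one must be careful about exactly what is claimed; the theorem only asserts that $P^{-1}|_{\mathcal{M}_n^N}$ agrees with \emph{a} free polynomial, not that the $P_n^{-1}$ assemble into one free polynomial valid at all sizes (the remark after Theorem \ref{freejc} in the introduction flags precisely this subtlety, citing the abundance of polynomial identities \cite{al50}). So the remaining task is: given $n$, produce a noncommutative polynomial $Q$ with $Q(X) = P_n^{-1}(X)$ for all $X \in \mathcal{M}_n(\mathbb{C})^N$. The natural route is to use that $P^{-1}$, being the inverse of a free map, is itself a free map on its domain (the equivalence $(2 \Leftrightarrow 3)$ of the inverse function theorem), hence respects direct sums and similarities; a free map on $\mathcal{M}^N$ which on each level agrees with \emph{some} polynomial map in the entries must, on the fixed level $n$, agree with a noncommutative polynomial --- this is a standard fact about free functions that are ``uniformly analytic'' / polynomially bounded of bounded degree (cf.\ the representation theory in \cite{hkm11, vvw}), since the degree of $P_n^{-1}$ is controlled by the Ax--Grothendieck degree bounds \cite{bcw82} independently of... well, it does depend on $n$, which is exactly why the conclusion is only leveled. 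I would extract from \cite{hkm11} or \cite{vvw} the precise lemma that a free map which is a bounded-degree polynomial in the entries on level $n$ is a noncommutative polynomial on level $n$, and cite it to close the argument.
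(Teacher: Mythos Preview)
Your proposal is correct and follows essentially the same route as the paper: restrict to each level $n$, apply the classical refined Ax--Grothendieck theorem to the commutative polynomial map $\mathbb{C}^{Nn^2}\to\mathbb{C}^{Nn^2}$ to get surjectivity and an entrywise polynomial inverse, then use that $P^{-1}$ is a free map to upgrade the level-$n$ inverse to a free polynomial. The only place the paper is more explicit is in the last step: rather than citing a black-box lemma, it observes that $P^{-1}$ (being free and continuous) is analytic and hence has a free power series expansion \cite{vvw}, and since $P_n^{-1}$ has finite degree the homogeneous terms of that series must eventually vanish on $\mathcal{M}_n(\mathbb{C})^N$, so a truncation gives the desired free polynomial on level $n$.
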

	\begin{proof}
		For each size of matrix $n$, we view $P$ as a tuple of $dn^2$ commuting polynomials by replacing the free variables with
		indeterminant $n$ by $n$ matices.
		Since $P|_{\mathcal{M}_n(\mathbb{C})^N}$ is injective,
		$P^{-1}|_{\mathcal{M}_n(\mathbb{C})^N}$ is given by a tuple of $dn^2$ commuting polynomials.
		Since the global $P^{-1}$ is a free map by the inverse function theorem and continuous free maps are analytic,
		they have a power series of free polynomials\cite{vvw},
		the restriction
		$P^{-1}|_{\mathcal{M}_n(\mathbb{C})^N}$ must agree with a free polynomial;
		the terms in the power series for $P^{-1}$ must eventually vanish on all of $\mathcal{M}_n(\mathbb{C})^N$.
	\end{proof}

	We now prove the the Jacobian conjecture for free polynomials, Theorem \ref{freejc}.
	\begin{proof}[Proof of Theorem \ref{freejc}]
		$(1 \Leftrightarrow 2)$ follows from the inverse function theorem.
		$(2 \Leftrightarrow 3 \Leftrightarrow 4)$ is the free Ax-Grothendieck theorem.
	\end{proof}
	
	We now prove the the Jacobian conjecture for commuting matrix polynomials, Theorem \ref{jcmatrix}.
	\begin{proof}[Proof of Theorem \ref{freejc}]
		$(1 \Leftrightarrow 2)$ follows from the inverse function theorem.
		
		$(2 \Rightarrow 4)$
		The function $P|_{\mathcal{M}_1(\mathbb{C})^N}$ has a polynomial inverse $P^{-1}$ by the Ax-Grothendieck theorem.
		Since $P|_{\mathcal{M}_1(\mathbb{C})^N}$ is equal to $P$ as a polynomial, we indeed obtained global inverse.
		(The values on the scalars determine a commutative polynomial.)
		
		$(4 \Rightarrow 3 \Rightarrow 2)$ is trivial.
	\end{proof}

\bibliography{references}
\end{document}